        \setlist[enumerate]{leftmargin=25pt}
        \setlist[itemize]{leftmargin=25pt}
        \theoremstyle{plain} \newtheorem{Thme}{Theorem}[section]
        \newtheorem{prop}[Thme]{Proposition}
        \newtheorem{lem}[Thme]{Lemma}
        \theoremstyle{definition}
        \newtheorem*{theorem*}{Theorem}
        \theoremstyle{remark}
        \newcommand{\comment}[1]{}
        \def\N{\mathbb{N}}
        \def\Z{\mathbb{Z}}
        \def\R{\mathbb{R}}
        \def\La{\Lambda}
        \def\la{\lambda}
\begin{document}

\begin{frontmatter}[classification=text]

\title{Spectral sets in $\mathbb{Z}_{p^2qr}$} 

\author[pgom]{G\'abor Somlai \thanks{Research was supported by the J\'anos Bolyai Research Fellowship of the Hungarian Academy
        of Sciences and the New National Excellence Program under the grant number UNKP-20-5-ELTE-231, received funding from the European Research Council (ERC) under the European
        Union’s Horizon 2020 research and innovation programme (grant agreement No. 741420)}}

\begin{abstract}
        We prove that every spectral set in $\Z_{p^2qr}$ tiles, where $p$, $q$ and $r$ are primes. Combining this with a recent result of Malikiosis \cite{Romanos2020} we obtain that Fuglede's conjecture holds for $\Z_{p^2qr}$.
        \end{abstract}
\end{frontmatter}


        \section{Introduction}
        A Lebesgue measurable set $\Omega \subset \R^n$ is called a {\it tile} if there is set $T$ in $\R^n$, called the {\it tiling complement} of $\Omega$ such that almost every element of $\R^n$ can uniquely written as $\omega+t$, where $\omega \in \Omega $ and $t \in T$.
        We say that $\Omega$ is {\it spectral} if $L^2(\Omega)$ admits an orthogonal basis consisting of exponential functions of the form $\{ e^{i (\lambda  x)} \mid \lambda \in \Lambda, x \in \Omega \}$, where $\La \subset \R^n$. In this case $\La$ is called a {\it spectrum } for $\Omega$.

        Fuglede conjectured \cite{F1974} that a bounded domain $S \subset \mathbb{R}^d$ tiles the $d$-dimensional Euclidean space if and only if the set of $L^2(S)$ functions admits an orthogonal basis of exponential functions. The conjecture might have been motivated by Fuglede's result \cite{F1974} that it is true when the tiling complement or the spectrum is a lattice in
        $\R^n$. The conjecture was disproved in \cite{T2004}. Tao considered a discrete version of the original conjecture and constructed spectral sets in $\Z_2^{11}$ and $\Z_3^5$, which are not tiles. The latter example was lifted to $\R^5$ to refute the spectral-tile direction of Fuglede's conjecture. 
        Matolcsi \cite{M2004} proved that the spectral-tile direction of the conjecture
        fails in $\R^4$. Kolountzakis and Matolcsi
        \cite{KMkishalmaz,KM2006a} and Farkas, Matolcsi and M\'ora \cite{FMM2006}
        provided counterexamples in $\R^3$ for both 
        directions of the conjecture. 
        
        It is important to note that every (finite) tile of the integers is periodic \cite{cm}. Moreover, the tile-spectral direction of the conjecture is true for $\R$ if and only it holds for $\Z$, which is further equivalent for the conjecture to hold for every finite cyclic group, see \cite{DL2014}. If the spectral-tile direction of Fuglede's conjecture holds for $\R$, then it holds for $\Z$, which again implies that it holds for every (finite) cyclic group. On the other hand, the converse of this statements is not necessarily true. 
        
        The recent investigations of Fuglede's conjecture for finite cyclic groups started with a result of \L aba \cite{laba} proving that both directions of Fuglede's conjecture holds for $\mathbb{Z}_{p^k}$ for every positive integer $k$. It was also proved in \cite{laba} that tile-spectral direction holds for cyclic groups whose order is divisible by (at most) two different primes.
        Kolountzakis and Malikiosis \cite{Kol} proved that the conjecture holds for $\Z_{p^k q}$, where $p^k$ is an arbitrary power of the prime $p$ and $q$ is a prime. As a strengthening it was proved in \cite{negyen} that Fuglede's conjecture holds for $\Z_{p^nq^2}$, where $p$ and $q$ are prime. A recent result of Malikiosis shows that a for cyclic group of order $p^mq^n$, Fuglede's conjecture holds if $ \min \{m,n \} \le 6$ or $p^{m-2}<q^4$.
        Another important result was proved by Shi \cite{shi} who showed that Fuglede's conjecture is true for $\Z_k$ if $k$ is the product of $3$ (distinct) primes. 
        
        The so-called Coven-Meyerowitz conjecture states that if $n$ is square-free, then every tile of $\Z_n$ is a complete set of residues$\pmod{k}$, where $k$ is a divisor of $n$. This was originally settled (in two short comments) by \L aba and Meyerowitz on Tao's blog and a self-contained proof of this fact was provided by Shi \cite{shi}. 
        
        Coven and Meyerowitz \cite{cm} proved that a subset $A$ of $\Z_n$ tiles if  two properties called (T1) and (T2), defined in the next section, are satisfied. 
        The converse also holds if $n$ has at most two different prime divisors or if $n=p^m q_1 \ldots q_k$ where $p, q_1, \ldots ,q_k$ are different primes \cite{Romanos2020}. Furthermore it was also proved in \cite{cm} that for every $n \in \N$ the tiles of $\Z_n$ satisfy (T1).
        
        The main result of the paper is the following. 
        \begin{Thme}
        Every spectral set in $\Z_{p^2qr}$ is a tile. 
        \end{Thme}
        Combining this theorem with the result of Malikiosis mentioned above we obtain that Fuglede's conjecture holds for $\Z_{p^qr}$.
        \section{Fuglede's conjecture for cyclic groups}
        Fuglede's conjecture is still open in the $1$ and $2$ dimensions. We will focus on the one dimensional case which is heavily connected to the discrete version of the conjecture for cyclic groups. 
        
        Let $S $ be a subset of $\Z_n$. We say that $S$ is a tile if and only if there is a $T \subset \Z_n$ such that $S+T=\Z_n$ and $|S||T|=n$. 
        We say that $S$ is spectral if and only if the vector space of complex functions on $S$ is spanned by pairwise orthogonal functions, which are the restrictions of some irreducible representations of $\Z_n$. Note that cyclic groups are abelian thus the irreducible representations considered here are all one dimensional so these are just characters. The irreducible representations of $\Z_n$ are of the following form: 
        \[ \chi_k(x)=e^{\frac{2 \pi i k}{n}x},\]
        so these are parametrized by the elements of $\Z_n$. It is easy to verify that $\chi_k$ and $\chi_l$ are orthogonal if and only if $\chi_{k-l}$ is orthogonal to the trivial representation, which can also be written as 
        \[ \sum_{s\in S} \chi_{k-l}(s)=0.
        \]
        One can assign a polynomial $m_S$ to $S$ by $\sum_{s \in S}x^s$, which is called the {\it mask polynomial} of $S$. It is easy to see that $\sum_{s\in S} \chi_{k}(s)=0$ if and only if $\xi_{\frac{n}{gcd(k,n)}}$ is a root of $m_S$, where $\xi_d$ is a primitive $d$'th root of unity. This can also be written as $\Phi_d \mid m_S$, where $\Phi_d$ is the $d$'th cyclotomic polynomial. 
        Note that mask polynomials can also be defined for any element of the group ring $\Z[\Z_n]$.
        
        Now using the character table of $\Z_n$ we have that $(S,\La)$ is a spectral pair if and only if the submatrix of the character table whose rows are indexed by the elements of $\La$ and columns by those of $S$ is a complex Hadamard matrix. In fact, the adjoint of a complex Hadamard matrix is also a complex Hadamard matrix so if $(S,\La)$ is a spectral pair, then $(\La,S)$ is a spectral pair too. 
        
        Now we introduce the properties that are needed to formulate the Coven-Meyerowitz conjecture (first appeared in a paper of \L aba and Konyagin \cite{labakonyagin}) for any cyclic group. Let $H_S$ be the
        set of prime powers $p^k$ dividing $N$ such that $\Phi_{p^k}(x) \mid
        m_S(x)$.
        \begin{enumerate}[{\bf(T1)}]
             \item\label{t1} $m_S(1)=\prod_{d \in H_S} \Phi_d(1)$,
         \item\label{t2} for pairwise relatively prime elements $q_i$ of $H_S$, we have $\Phi_{\prod q_i} \mid m_S(x)$.
        \end{enumerate}
        We remind that if (T1) and (T2) hold for some $S \subset \Z_n$, then $S$ is a tile and if $S$ is a tile, then (T1) holds, see \cite{cm}. Further we mention that \L aba \cite{laba} proved that a set having (T1) and (T2) properties also is a spectral set.
        \section{Preliminary lemmas}
        For the sake of simplicity let $n=p^2qr$.
        First, we collect the results obtained in \cite{negyen} that apply in our case. Note that in our case ($n=p^2qr$) for every proper subgroup or quotient group of $\Z_n$ we have that spectral sets coincide with tiles. 
        The results of Section 4 in \cite{negyen} were summarised in a statement called Reduction 1 but it is important to note that $n$ has more than $2$ different prime divisors so we only have the following. 
        \begin{prop}\label{propreduction}
        Let us assume that $(S,\La)$ is a spectral pair for an abelian group whose subgroups and factor groups satisfy the spectral-tile direction of Fuglede's conjecture. Without loss of generality we may assume $0 \in S$, $0 \in \La$. Further $S$ is a tile if one of the following holds. 
        \begin{enumerate}
            \item\label{itemreductiona} $S$ or $\La$ does not generate $\Z_n$, 
            \item\label{itemreductionb} $S$ can be written as the union of $\Z_u$-cosets, where $u$ is a prime dividing $n$.
        \end{enumerate}
        \end{prop}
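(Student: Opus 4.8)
The plan is to translate both sets so that they contain $0$, and then, in each of the two cases, to descend to a proper subgroup or a proper quotient of $\Z_n$, invoke the hypothesis there, and lift the resulting tiling back to $\Z_n$. First I would record that $(S,\La)$ is a spectral pair exactly when the $\La\times S$ submatrix $M=(\chi_\lambda(s))$ of the character table of $\Z_n$ has pairwise orthogonal rows; scaling a row or a column of $M$ by a unimodular constant preserves this, and replacing $S$ by $S+a$ (resp.\ $\La$ by $\La+b$) does exactly that, so being a spectral pair is invariant under translating $S$ or $\La$, while tiling is translation invariant as well. Hence we may translate so that $0\in S$ and $0\in\La$, which is the first assertion and reduces everything to showing that $S$ tiles under (a) or (b). Two elementary facts about tilings of cyclic groups will be used: ($\star$) if $H\le\Z_n$ and $S\subseteq H$ tiles $H$, then $S$ tiles $\Z_n$ (glue a complement inside $H$ to a transversal of $H$); and ($\star\star$) if $\pi\colon\Z_n\twoheadrightarrow\Z_n/K$ is a quotient and $\bar S\subseteq\Z_n/K$ tiles, then $\pi^{-1}(\bar S)$ tiles $\Z_n$, as does any $S'$ mapped bijectively onto $\bar S$ by $\pi$ (take $\pi^{-1}(\bar T)$, resp.\ a section of $\bar T$, as complement, where $\bar S\oplus\bar T=\Z_n/K$).

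For (a): if $S$ does not generate $\Z_n$, then $S\subseteq H:=\langle S\rangle$, a proper cyclic subgroup, and two characters of $\Z_n$ agreeing on $H$ would give equal, hence non-orthogonal, rows of $M$; so the elements of $\La$ restrict to $|\La|=|S|$ distinct, still pairwise orthogonal characters of $H$, making $S$ a spectral subset of $H$, which by hypothesis tiles $H$, and by ($\star$) tiles $\Z_n$. If instead $\La$ does not generate $\Z_n$, put $L:=\langle\La\rangle$ and $K:=L^\perp\neq\{0\}$; every $\chi_\lambda$ with $\lambda\in\La$ is trivial on $K$, so columns of $M$ indexed by $K$-congruent elements coincide, forcing $\pi\colon\Z_n\to\Z_n/K$ to be injective on $S$. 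Then $(\pi(S),\La)$, with $\La$ read inside $\widehat{\Z_n/K}\cong K^\perp$, is a spectral pair in the factor group $\Z_n/K$, so $\pi(S)$ tiles $\Z_n/K$, and by ($\star\star$) applied to the section $S$ of $\pi(S)$ we get that $S$ tiles $\Z_n$.

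For (b): let $u\mid n$ be the prime in question, $K$ the unique subgroup of order $u$, and $\pi\colon\Z_n\to\Z_n/K\cong\Z_{n/u}$; by assumption $S=\pi^{-1}(\bar S)$ with $\bar S=\pi(S)$, so $|S|=u\,|\bar S|$. Since $S$ is a union of $K$-cosets, $\sum_{s\in S}\chi_\mu(s)$ vanishes unless $\mu\in K^\perp$, and for $\mu\in K^\perp$ it equals $u\sum_{\bar s\in\bar S}\bar\chi_{\bar\mu}(\bar s)$. I would split $\La$ into its residue classes $\La_0,\dots,\La_{u-1}$ modulo $u$; within a class any difference $\lambda-\lambda'$ lies in $K^\perp\setminus\{0\}$, so the spectral condition forces the corresponding character of $\Z_n/K$ to be orthogonal to $\bar S$, and translating a class to the origin yields $|\La_j|$ pairwise orthogonal nonzero vectors in $\C^{|\bar S|}$, whence $|\La_j|\le|\bar S|$. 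Summing, $u\,|\bar S|=|S|=|\La|=\sum_j|\La_j|\le u\,|\bar S|$, so equality holds throughout and each translated $\La_j$ is a spectrum for $\bar S$ in $\Z_{n/u}$; by hypothesis $\bar S$ tiles $\Z_{n/u}$, and by ($\star\star$) the preimage $S=\pi^{-1}(\bar S)$ tiles $\Z_n$.

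I expect the only genuinely delicate point to be this last pigeonhole in (b): one has to notice that a spectrum of a union of $K$-cosets is forced to place $|\bar S|$ of its members in each residue class mod $u$, so that every class individually descends to a spectrum of the reduced set. Everything else — the translation normalization, the impossibility of repeated rows or columns in a Hadamard submatrix, and the two lifting facts ($\star$) and ($\star\star$) — is routine bookkeeping with characters, subgroups and quotients, where I do not anticipate obstacles.
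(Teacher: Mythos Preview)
Your argument is correct. The paper does not actually prove this proposition: it imports it verbatim from \cite{negyen}, noting only that it is the part of ``Reduction~1'' from that paper which survives when $n$ has more than two prime divisors. So there is no in-paper proof to compare against; what you have written is exactly the standard justification that lies behind the cited reductions.

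For the record, the ingredients you use --- translation invariance of spectrality and tiling, the fact that equal rows or columns are impossible in the complex Hadamard submatrix $(\chi_\lambda(s))$, restriction of a spectral pair to $\langle S\rangle$, descent of a spectral pair through the quotient by $\langle\La\rangle^{\perp}$, and the pigeonhole on residue classes of $\La$ modulo $u$ in part~(b) --- are precisely the ones used in the literature (e.g.\ in \cite{negyen}) to obtain these reductions. The one point worth flagging explicitly in your write-up of (b) is that the $u$ residue classes $\La_0,\dots,\La_{u-1}$ may a priori be empty; your inequality $\sum_j|\La_j|\le u\,|\bar S|$ together with $|\La|=u\,|\bar S|$ forces each to have exactly $|\bar S|$ elements, so in particular none is empty and any one of them yields a spectrum for $\bar S$. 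With that remark made explicit, the proof is complete.
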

        \begin{lem}\label{lempqnemoszt}
        Let $0 \in T \subset \Z_N$ a generating set and assume $x$ and $y$ are different prime divisors of $N$. Then there are elements $t_1 \ne t_2$ of $T$ such that $x \nmid t_1-t_2$ and $y \nmid t_1-t_2$ for any pair of prime divisors of $N$.
        \end{lem}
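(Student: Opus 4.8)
The plan is to argue by contradiction. Fix the two primes $x,y \mid N$ and suppose that no pair $t_1 \ne t_2$ in $T$ satisfies both $x \nmid t_1-t_2$ and $y \nmid t_1-t_2$; equivalently, for every $t_1,t_2 \in T$ at least one of $x \mid t_1-t_2$ or $y \mid t_1-t_2$ holds. Read through the residues modulo $x$ and modulo $y$, this says that two elements of $T$ which are incongruent mod $x$ are forced to be congruent mod $y$, and symmetrically.

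The first step I would carry out is a dichotomy: under this standing assumption, either all elements of $T$ lie in one residue class mod $x$, or all of them lie in one residue class mod $y$. To see this, suppose neither alternative holds, so there are $a,b \in T$ with $a \not\equiv b \pmod x$ (hence $a \equiv b \pmod y$) and $c,d \in T$ with $c \not\equiv d \pmod y$ (hence $c \equiv d \pmod x$). Then a short case split on whether $a \equiv c \pmod x$ or $a \equiv c \pmod y$, using transitivity of $\equiv \pmod x$ through the pair $a,b$ and of $\equiv \pmod y$ through the pair $c,d$, drives both cases to a contradiction. For instance, if $a \equiv c \pmod x$ then $a \equiv d \pmod x$, so $b$ is incongruent mod $x$ to both $c$ and $d$, whence $b \equiv c \pmod y$ and $b \equiv d \pmod y$, contradicting $c \not\equiv d \pmod y$; the complementary case is symmetric. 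This establishes the dichotomy.

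The final step uses the hypothesis $0 \in T$. If all of $T$ lies in a single class mod $x$, that class must be $0$, so $x \mid t$ for every $t \in T$, and hence $T$ is contained in the proper subgroup $x\Z_N$ of index $x \ge 2$; this contradicts the assumption that $T$ generates $\Z_N$. The case where all of $T$ is congruent mod $y$ is identical. Therefore the desired pair $t_1 \ne t_2$ exists.

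I do not anticipate a real obstacle here: everything reduces to elementary congruence arithmetic together with the fact that $0 \in T$ pins the common residue class to $0$. The only point that needs some care is the bookkeeping in the case analysis of the dichotomy step, so that the two uses of transitivity are made transparent.
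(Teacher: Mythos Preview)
Your argument is correct. The contradiction hypothesis indeed forces the dichotomy you describe, and the case analysis you sketch goes through exactly as written; once $T$ lies in a single residue class mod $x$ (or mod $y$), the presence of $0$ puts $T$ inside the proper subgroup $x\Z_N$ (or $y\Z_N$), contradicting that $T$ generates.

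The paper's proof, however, takes a shorter direct route rather than arguing by contradiction through a dichotomy. Since $T$ generates $\Z_N$, it is not contained in $x\Z_N$, so there is $t_1 \in T$ with $x \nmid t_1$; likewise there is $t_2 \in T$ with $y \nmid t_2$. If also $y \nmid t_1$, then the pair $(t_1,0)$ already works; symmetrically if $x \nmid t_2$. Otherwise $y \mid t_1$ and $x \mid t_2$, and then $x \nmid t_1-t_2$ and $y \nmid t_1-t_2$, so $(t_1,t_2)$ is the desired pair. Thus the paper constructs the pair explicitly in two or three lines, whereas your proof invests in a structural statement (the dichotomy) before invoking the generating hypothesis. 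Your version gives a bit more information about what the failure mode would look like, at the cost of a longer case analysis; the paper's version is more economical but less conceptual.
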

        \begin{proof}
        $T$ is not contained in any proper coset of $\Z_N$ so it contains an element $t_1$ not divisible by $x$ and $t_2$ not divisible by $y$. If $y \nmid t_1$, then $t_1-0 \in (T -T)$ is not divisible by either $x$ or $y$, when we are done so we may assume $y \mid t_1$. 
        Similar argument shows that we may assume $x \mid t_2$. Then $x \nmid t_1-t_2$ and $y \nmid t_1-t_2$, as required. 
        \end{proof}
        Another important tool is the following lemma. This is the same as Proposition 3.4 in \cite{negyen} formulated in a different language. Let $m$ be a square-free integer, where $m$ is the product of $d$ primes. Then $\Z_m \cong \bigoplus_{i=1}^d\Z_{p_i}$ a direct sum of $d$ cyclic groups of different  prime order so the elements of $\Z_m$ are encoded by $d$-tuples. This allows us to introduce Hamming distance on $\Z_m$. Further we say that $P$ is a cuboid in $\Z_m$ if it can be written as $\prod H_i$, where $H_i \subset \Z_{p_i}$ with $|H_i|=2$. 
        \begin{prop}\label{prophypercube}
        Let $w$ be an element of the group ring $\Z[\Z_m]$ with nonnegative coefficients, where 
        $m=\prod_{i=1}^d p_i$, a product of $d$ different primes. Assume $\Phi_m \mid m_w$. Let $P$ be a $d$-dimensional
        cuboid and $p$ a vertex of $P$. Then
        \begin{equation}\label{eqparallelograme}
        \sum_{c \in P} (-1)^{d_H(p,c)}w(c)=0.
        \end{equation}
        \end{prop}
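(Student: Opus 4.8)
The plan is to translate everything into Fourier coefficients on $\Z_m$. By the Chinese Remainder Theorem I would write $\Z_m \cong \prod_{i=1}^d \Z_{p_i}$, so that every character of $\Z_m$ factors as $\chi = \chi_1\cdots\chi_d$ with $\chi_i$ a character of $\Z_{p_i}$, and $\chi$ has order exactly $m$ (equivalently $\chi(1)$ is a primitive $m$-th root of unity) precisely when each $\chi_i$ is nontrivial. Writing $\widehat u(\chi) = \sum_{g \in \Z_m} u(g)\chi(g)$ for $u \in \C[\Z_m]$, one has $m_u(\chi(1)) = \widehat u(\chi)$, so the hypothesis $\Phi_m \mid m_w$ is exactly the statement that $\widehat w(\chi) = 0$ for every such ``faithful'' character $\chi$. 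This reformulation is the first step, and it is the only place where squarefreeness of $m$ enters.

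Next I would package the left-hand side of \eqref{eqparallelograme} as a single group-ring element. With $p = (a_1,\dots,a_d)$ and $P = \prod_{i=1}^d\{a_i,b_i\}$, $a_i \ne b_i$, set
\[
\delta \;=\; \prod_{i=1}^d (a_i - b_i) \;\in\; \Z[\Z_m] \cong \bigotimes_{i=1}^d \Z[\Z_{p_i}],
\]
where $a_i - b_i$ denotes the element of $\Z[\Z_{p_i}]$ with coefficient $+1$ at $a_i$ and $-1$ at $b_i$. Expanding the product, the coefficient of a vertex $c \in P$ in $\delta$ is $(-1)^{d_H(p,c)}$, so the quantity in \eqref{eqparallelograme} equals $\sum_{g} \delta(g)\,w(g)$. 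Since the Fourier transform is multiplicative across the tensor decomposition, $\widehat\delta(\chi) = \prod_{i=1}^d\bigl(\chi_i(a_i) - \chi_i(b_i)\bigr)$, and the $i$-th factor is $0$ as soon as $\chi_i$ is trivial; hence $\widehat\delta$ is supported on the faithful characters.

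I would then observe that the supports of $\widehat w$ and $\widehat\delta$ are complementary in the way we need. By Fourier inversion, $w(g) = \frac1m\sum_\chi \widehat w(\chi)\overline{\chi(g)}$, and therefore
\[
\sum_{g}\delta(g)\,w(g) \;=\; \frac1m\sum_{\chi}\widehat w(\chi)\,\overline{\widehat\delta(\chi)}.
\]
Every summand vanishes: when $\chi$ is not faithful $\widehat\delta(\chi) = 0$, and when $\chi$ is faithful $\widehat w(\chi) = 0$ by the reformulated hypothesis. Thus the sum is $0$, which is precisely \eqref{eqparallelograme}.

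I do not expect a genuine obstacle here: the three steps are the CRT dictionary, a one-line expansion, and Plancherel. The only point demanding care is the first step — checking that, for squarefree $m$, a character of $\Z_m$ has order $m$ exactly when each of its $\Z_{p_i}$-components is nontrivial, and that $\Phi_m \mid m_w$ is equivalent to $\widehat w$ vanishing on all of these. I also note that the argument never uses the nonnegativity of the coefficients of $w$; that hypothesis is presumably retained only because the proposition is quoted from \cite{negyen} for later combinatorial purposes. Should one wish to avoid characters, an alternative is induction on $d$, with base case $d=1$ the elementary fact that $\Phi_p \mid m_w$ forces all coefficients of $w$ equal; but that route involves heavier bookkeeping and the Fourier computation above is cleaner.
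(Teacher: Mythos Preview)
Your argument is correct. The Fourier/Plancherel computation goes through exactly as you describe: for squarefree $m$ the faithful characters are precisely those with all $\Z_{p_i}$-components nontrivial, $\widehat\delta$ vanishes on every non-faithful character because at least one factor $\chi_i(a_i)-\chi_i(b_i)$ is zero, and the hypothesis kills $\widehat w$ on the faithful ones. Your remark that nonnegativity is irrelevant is also right.

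The paper, however, does not prove the proposition this way. It cites the structure theorem from \cite{negyen} (their Corollary~3.4): if $\Phi_m\mid m_w$ with $m$ squarefree, then $w$ is a rational linear combination of indicator functions of $\Z_{p_i}$-cosets. The cube-rule is then a one-line corollary, since any single $\Z_{p_i}$-coset meets a cuboid $P$ either not at all or in exactly two vertices that differ only in the $i$-th coordinate and hence carry opposite signs. So the paper's route is \emph{structure theorem $\Rightarrow$ combinatorial cancellation}, whereas yours is a direct harmonic-analytic orthogonality argument that bypasses the structure theorem entirely. Your approach is self-contained and arguably cleaner for this isolated statement; the paper's approach has the advantage that the coset decomposition is reused elsewhere (e.g.\ in the discussion of non-squarefree $m$ immediately after the proposition, and implicitly in Lemma~\ref{lemcuberulecorr}), so once that decomposition is in hand the cube-rule comes for free.
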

        We will refer to the previous proposition or more precisely to equation \eqref{eqparallelograme} as the $d$-dimensional cube-rule. Note that Proposition \ref{prophypercube} is a corollary of Corollary 3.4 in \cite{negyen}, that is formulated below. 
        \begin{lem}\label{lem0603}
        Let $w$ be an element of the group ring $\Z[\Z_m]$ with nonnegative coefficients. Assume $\Phi_m \mid m_{\omega}$.
        \begin{enumerate}
        \item\label{item:lem0603a} Then $w$ can be written as the weighted sum of $\Z_{p_i}$-cosets with rational coefficients. 
        \item If $m=pq$, where $p$ and $q$ are different primes, then $w$ can be written as the $\Z_p$-cosets and $\Z_q$-cosets with nonnegative integer coefficients.
        \end{enumerate}
        \end{lem}
        In fact, the coefficients in Lemma \ref{lem0603} \ref{item:lem0603a} can be chosen to be integers.

        It is also important to know what happens if $\omega$ is a multiset on $\Z_m$, with $\Phi_m \mid m_{\omega}$, where $m$ is not square free, which is also described in \cite{negyen}. Let $m'$ be the radical of $m$. Then $\omega$ is the weighted sum of $\Z_{p_i}$-cosets with integer coefficients again, implying that the cube-rule holds for the restriction of $\omega$ to each $\Z_{m'}$-coset.   
        \begin{lem}\label{lemcuberulecorr}
        Let $T \subset \Z_{pqr}$. Assume $T$ satisfies the $3$-dimensional cube-rule and $T \cap ((t+\Z_q) \cup (t+\Z_r))=\{t\}$ for all $t\in T$, where $\Z_q$ and $\Z_r$ denotes the unique subgroups of $\Z_{pqr}$ of order $q$ and $r$, respectively. Then $T$ is a union of $\Z_p$-cosets.
        \end{lem}
        Note that the statement holds verbatim for any permutation of the primes $p,q,r$.
        \begin{proof}
        Assume $\Phi_{pqr} \mid m_T$, then $T$ satisfies the $3$ dimensional cube-rule. Suppose $t \in T$. By our assumption $T \cap (t+\Z_q)=\{t\}$ and $T \cap (t+\Z_r)=\{t\}$.   
        
        By the way of contradiction, assume $T$ is not a union of $\Z_{p}$-cosets so there is $t \in T$ with $t_p \in (t+\Z_p)\setminus T$. Then for every cuboid containing $t$ and $t_p$ as its vertices, the neighbours (consider the cube as the natural graph on $8$ vertices) of $t$ are not in $T$. Now using the cube-rule we obtain that the vertex of the cuboid, which is of Hamming distance $3$ from $t$ is contained in $T$.
        Thus for every $x \in \Z_{pqr}$ with $p \mid x-t_p$ and $d_H(x,t)=3$ we have $x \in T$. Then there are elements of $T$ whose difference is divisible by $pr$ if $q>2$  and the same holds with  $pq$ if $r>2$. This contradicts the assumption that $T \cap ((x+\Z_q) \cup (x+\Z_r))=\{x\}$.
        \end{proof}
        Now we prove a Lemma that will be used in the proof of our main result. Since $\Z_n$ is a cyclic group, for every $m \mid n$ there is a unique subgroup of $\Z_n$ of order $m$. Thus if $f$ is a function on $\Z_n$, then there is a well defined way to define its projection to $\Z_{\frac{n}{m}}$ by summing the values of $f$ on each $\Z_m$-coset. This can also be applied to sets by identifying them with their characteristic function. 
        \begin{lem}\label{lemprojekcio}
        Let $T \subset \Z_N$, where $N$ is a positive integer and let $m $ and $r$ be divisors of $N$, where $r$ is a prime with $(m,r)=1$. Assume further that for every $1 < d \mid m$ we have $\Phi_d \mid m_T$ or $\Phi_{dr} \mid m_T$. Let $T_m$ denote the projection (points counted with multiplicity) of $T$ to $\Z_m$. Then $T_m=c\Z_m+rD$, where $c \ge 0$ an integer and $D$ is a multiset on $\Z_m$. 
        \end{lem}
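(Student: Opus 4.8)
The plan is to reduce the claim to a single divisibility statement about $m_{T_m}$ modulo $r$ and then verify that statement one divisor $d\mid m$ at a time. Here $T_m$ is the image of $T$ in $\Z_m$ taken with multiplicity, $\Z_m$ in the conclusion denotes the uniform multiset on $\Z_m$, and $rD$ is the scalar multiple, so that ``$T_m=c\Z_m+rD$ for some integer $c\ge 0$ and some multiset $D$'' is literally the assertion that all multiplicities of $T_m$ are congruent modulo $r$. Writing $f(x)=m_{T_m}(x)\bmod r\in\F_r[x]$, a polynomial of degree $<m$, this is equivalent to $\tfrac{x^m-1}{x-1}\mid f$ in $\F_r[x]$, and since $(m,r)=1$ makes the $\overline{\Phi_d}$, $d\mid m$, pairwise coprime over $\F_r$, it is in turn equivalent to $\overline{\Phi_d}(x)\mid f(x)$ for every $d\mid m$ with $d>1$. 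I would also first pass to $N=mr$: replacing $T$ by its image $W$ in $\Z_{mr}$ affects neither the hypotheses (only $\Phi_d$ and $\Phi_{dr}$, with $dr\mid mr$, occur) nor $T_m$, and then $\Z_{mr}\cong\Z_m\times\Z_r$ lets me write $m_W(x,y)=\sum_{a}x^a f_a(y)$ with $f_a\in\Z[y]$ of degree $<r$ and $m_{T_m}(x)=m_W(x,1)$.

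Now fix $d\mid m$ with $d>1$; the hypothesis gives two cases. If $\Phi_d\mid m_W$, then since $(d,r)=1$ every character of $\Z_{mr}$ of order $d$ is trivial on the $\Z_r$-factor, so $\Phi_d(x)\mid m_W(x,1)=m_{T_m}(x)$ already over $\Z$, hence $\overline{\Phi_d}\mid f$. If instead $\Phi_{dr}\mid m_W$, then every order-$dr$ character of $\Z_{mr}$ has both coordinates primitive, so $m_W(\zeta_d,\zeta_r)=0$ for a primitive $d$-th root $\zeta_d$ and a primitive $r$-th root $\zeta_r$, an identity in $\Z[\zeta_{dr}]=\Z[\zeta_d][\zeta_r]$ (equality because $(d,r)=1$). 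Reducing modulo the ideal $(1-\zeta_r)$ and using $\zeta_r\equiv 1$ there collapses each $f_a(\zeta_r)$ to $f_a(1)$, whence $m_{T_m}(\zeta_d)=m_W(\zeta_d,1)\equiv m_W(\zeta_d,\zeta_r)=0$, i.e. $m_{T_m}(\zeta_d)\in(1-\zeta_r)\Z[\zeta_{dr}]\cap\Z[\zeta_d]$. Because $\Z[\zeta_{dr}]\cong\Z[\zeta_d][y]/(\Phi_r(y))$ and $\Phi_r(1)=r$, this intersection is exactly $r\Z[\zeta_d]$ (one inclusion by evaluating a witness at $y=1$, the other from $r\equiv(1-y)\psi(y)\bmod\Phi_r(y)$). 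Performing the Euclidean division $m_{T_m}(x)=\Phi_d(x)q(x)+s(x)$ with $\deg s<\varphi(d)$ and expanding $s(\zeta_d)$ in the integral basis $1,\zeta_d,\dots,\zeta_d^{\varphi(d)-1}$ then forces $r\mid s(x)$, so $\overline{\Phi_d}\mid f$ again. Having this for all $d>1$ gives $\tfrac{x^m-1}{x-1}\mid f$, and the degree bound $\deg f<m$ forces $f=\overline c\cdot\tfrac{x^m-1}{x-1}$ for a constant; taking $c\in\{0,\dots,r-1\}$ to be the common residue of the multiplicities of $T_m$ yields $T_m=c\Z_m+rD$ with $D(a)=(T_m(a)-c)/r\ge 0$.

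The main obstacle is the second case: converting the single scalar identity $m_W(\zeta_d,\zeta_r)=0$ into a mod-$r$ divisibility of the one-variable polynomial $m_{T_m}$ by $\Phi_d$. What makes it work is the combination of $(d,r)=1$ (so $\Z[\zeta_{dr}]\cong\Z[\zeta_d][y]/(\Phi_r(y))$ with $\Z[\zeta_d]$-basis $1,y,\dots,y^{\varphi(r)-1}$), the congruence $\zeta_r\equiv 1$ modulo $(1-\zeta_r)$, and $\Phi_r(1)=r$; the last identity is where $r$ being prime enters and gives the form of the lemma used later, a general divisor $r$ yielding only the analogous conclusion with $r$ replaced by $\Phi_r(1)$. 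The remaining ingredients — the character bookkeeping under $\Z_{mr}\cong\Z_m\times\Z_r$, the integral-basis computation, and the reduction to $N=mr$ — are routine.
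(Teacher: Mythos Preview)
Your argument is correct, but it takes a considerably longer route than the paper. The paper works entirely with one-variable polynomials over $\F_r$: from the factorisation $\Phi_d(x^r)=\Phi_{dr}(x)\Phi_d(x)$ (for $r$ prime, $(d,r)=1$) together with the Frobenius congruence $\Phi_d(x^r)\equiv\Phi_d(x)^r$ in $\F_r[x]$, one reads off $\Phi_{dr}(x)\equiv\Phi_d(x)^{r-1}$ over $\F_r$, so $\Phi_{dr}\mid m_T$ over $\Z$ immediately gives $\Phi_d\mid m_T$ over $\F_r$. After that the two proofs converge: pairwise coprimality of the $\overline{\Phi_d}$ for $d\mid m$ (using $(m,r)=1$) yields $\prod_{1<d\mid m}\Phi_d\mid m_{T_m}$ in $\F_r[x]$, and one reads off the conclusion. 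Your detour through $\Z_{mr}\cong\Z_m\times\Z_r$, the ideal $(1-\zeta_r)$ in $\Z[\zeta_{dr}]$, the identity $(1-\zeta_r)\Z[\zeta_{dr}]\cap\Z[\zeta_d]=r\Z[\zeta_d]$, and the integral-basis computation all reprove, in effect, the single congruence $\Phi_{dr}\equiv\Phi_d^{r-1}\pmod r$; it is a valid and more number-theoretic explanation of why $r$ must be prime, but it is heavier than needed and requires the preliminary reduction to $N=mr$ that the paper's approach avoids altogether.
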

        \begin{proof}
        Since $(d,r)=1$ we obtain $\Phi_{dr}(x)=\frac{\Phi_d(x^r)}{\Phi_d(x)}$. This equation can be written as $\Phi_{dr}(x)=\Phi_d(x)^{r-1}$ in the polynomial ring $\Z_r[x]$. Now $\Phi_{dr}(x) \mid m_T(x)$ implies $\Phi_d(x) \mid m_T(x)$ in $\Z_r[x]$. These cyclotomic polynomials are pairwise
        relatively prime in $\Z_r[x]$ as well so we obtain $\prod_{1 < d \mid m} \Phi_d \mid m_T$ in $\Z_r[x]$. This implies first that 
        $\prod_{1 <d \mid m} \Phi_d \mid m_T$ in $\mathbb{Z}_r[x]$, so that $$ m_T(x) \equiv c \left( 1+x+ \cdots +x^{m-1} \right) \pmod{x^m-1}$$ in $\Z_r[x]$, for some $0 \le c <r$. If we also have $\Phi_1(x) \mid m_T$ in $\mathbb{Z}_r[x]$, then we have $c=0$.
        \end{proof}
        \section{Proof of the main result} 
        Let $(S, \La)$ be a spectral pair. We will distinguish certain different cases by the cardinality of $S$, which is equal to that of $\La$. For a subset $A \subseteq \Z_n$ we write $k \mid \mid |A|$ if $gcd(|A|,n)=k$.
        
        Before we start proving our main result we introduce a notation. 
        For every $k \mid n$ there is a unique subgroup of order $k$ of $\mathbb{Z}_n$, which we may also denote by $\mathbb{Z}_k$. For a subset $A$ of $\mathbb{Z}_n$ one can define a function from the cosets of $\mathbb{Z}_k$ to $\mathbb{N}$. The image of a coset is the number of elements of $A$ contained in the coset. This function can also be considered as a multiset i.e. elements of $\Z[\Z_{\frac{n}{k}}]$ with nonnegative coefficients, and it will be denoted by $A_k$.
        Sometimes we say that $A_k$ is the natural projection of $A$ to $\mathbb{Z}_{\frac{n}{k}}$.
        \subsection{$|S|$ has 3 prime divisors} 
        In this case we assume that $|S|$ is divisible by three of the primes $p,q,r$ counted with multiplicity. 
        Thus the cases handled here are $p^2q \mid \mid |S|$, $p^2r \mid \mid |S|$ and $pqr \mid \mid |S|$. 
        
        Assume  $p^2q\mid\mid |S|$.  Project $S$ to $\Z_{p^2q}$. If two elements of $S$ project to the same element of $\Z_{p^2q}$, then we have a pair of elements of $s_1, s_2 \in S$ with $p^2q \mid \mid s_1-s_2$. 
        Note that this is always the case when $|S|>p^2q$. Thus we have $\Phi_r \mid m_{\La}$, which implies $r \mid |\La|=|S|$, a contradiction.
        Thus $|S|=p^2q$ and $S$ is a complete set of residues $\pmod{p^2q}$ so is a tile. 
        
        The same argument works if $p^2r\mid\mid |S|$.
        
        Let us assume now that $pqr \mid \mid |S|$. If a $\Z_{p^2}$-coset contains at least $p+1$ elements of $S$, then $S$ contains a pair of elements contained in this $\Z_p$-coset and a pair of elements contained in the same $\Z_{p^2}$-coset but in different $\Z_p$-cosets. These imply $\Phi_p \mid m_{\La} $ and $\Phi_{p^2} \mid m_{\La} $ so $p^2 \mid |\La|=|S|$, a contradiction.  
        
        It remains to investigate the case when each $\Z_{p^2}$-coset contains exactly $p$ elements of $S$, which gives $|S|=pqr$. Moreover by excluding $\Phi_p \Phi_{p^2} \mid m_{\La}$ we obtain that 
        the intersection of $S$ with each $\Z_{p^2}$-coset is either a $\Z_p$-coset or it is a complete coset representative of $\Z_p$-cosets. In both of these cases $S$ is a tile. 
        
        Note that similar argument will also be used later in this paper if $|S|>p^2 \min \{q,r\}$ or $|S|>pqr$. 
        \subsection{$|S|$ has two prime divisors} Now we handle the cases when $p^2 \mid \mid |S|$, $pq \mid\mid |S|$, $pr \mid\mid |S|$ and $qr \mid\mid |S|$. 
        
        {\bf Case 1.} 
        We first handle the case $p^2 \mid \mid |S|$. \\
        Assume first that  $\Phi_n \mid m_S$. Then we may apply the cube-rule on every $\Z_{pqr}$-coset. By Lemma \ref{lemcuberulecorr}
        we obtain that $S$ is the union of $\Z_p$-cosets, which case is handled in Proposition \ref{propreduction}  \ref{itemreductionb}. Thus we may assume $\Phi_n \nmid m_S$.

        If every $\Z_{qr}$-coset contains exactly one element of $S$, then $S$ is a tile. Thus we may assume there are $s_1 \ne s_2 \in S$ with $p^2 \mid s_1-s_2$. If $p^2q \mid\mid s_1-s_2$ or $p^2r \mid\mid s_1-s_2$, then $r \mid |S|$ or $q \mid |S|$, respectively. Both of these cases contradict 
         $p^2 \mid \mid |S|$.
         Thus we have $p^2 \mid \mid s_1-s_2$ so $\Phi_{qr} \mid m_{\Lambda}$.

         It follows from $\Phi_{qr} \mid m_{\La}$ that $\La_{qr}$ is the weighted sum of $\Z_q$-cosets and $\Z_r$-cosets with nonnegative weights by Lemma \ref{lem0603}. Both type of cosets appear since otherwise 
        we would have $q  \mid |S|$ or $r  \mid |S|$, a contradiction. 
        
        Let $\Gamma$ be a graph whose vertices are the elements of $\La$ and two vertices are adjacent if and only if their difference is not divisible by either $q$ or $r$. 
        Note that we have already assumed $\Phi_n \nmid m_S$ which implies $p \mid \lambda-\lambda '$ or $q \mid
        \lambda-\lambda '$ or $r \mid \lambda-\lambda '$ for every
        $\lambda, \lambda ' \in \Lambda$.
        Thus if $\Gamma$ is connected, then we have $\La$ is contained in a $\Z_{pqr}$-coset, which is excluded by Proposition \ref{propreduction}.

        Without loss of generality we may assume $r>q$ so $r\ge 3$. 
        Then there are $\la, \la' \in  \La$ with  $q \mid \la-\la'$ but  $r \nmid \la-\la'$ and there is $\la'' \in \La$, whose $q$ and $r$-coordinates differ from those of $\la$ and $\la'$. By this we mean $q \nmid \la-\la''$, $q \nmid \la'-\la''$, $r \nmid \la-\la''$ and $r \nmid \la'-\la''$. 
        Since $\Phi_n \nmid m_S$ we have $p  \mid \la-\la''$ and $p  \mid \la'-\la''$ so we have $pq  \mid \la-\la'$ and $r \nmid \la-\la'$. 
        Thus we either have $\Phi_r \mid m_S$, which is excluded since $r \nmid |S|$, or we have $\Phi_{pr} \mid m_S$. 

        Assume $\Gamma$ is disconnected. Let $\tilde{\Lambda}_{qr}$ denote the underlying set of the multiset $\Lambda_{qr}$. We have that $\La_{qr}$ is the sum of $\Z_q$ and $\Z_r$-cosets and we have seen that  both types appear. Thus $\tilde{\La}_{qr}$ is the union of a $\Z_q$-coset $Q$ and a $\Z_r$-coset $R$, otherwise $\Gamma$ is connected. 
        In this case, $\Gamma$ has a large connected component consisting of those elements of $\La$ which do not project to the
        intersection of $Q$ and $R$ denoted by $x \in \Z_{qr}$. Thus these points are contained in a single $\Z_{pqr}$-coset. Note that the number of elements of $\La$ projecting to $x$ is less than $\frac{|\La|}{2}$.
        
        We claim that $\Phi_n \nmid m_{\La}$. By the way of contradiction let us assume $\Phi_n \mid m_{\La}$. Using the same argument when we excluded $\Phi_n \mid m_S$ we have that $\La$ is the union of $\Z_p$-cosets. But then $(\La,S)$ is a spectral pair and by Proposition \ref{propreduction} we have that $\La$ is a tile whence $|\La|=p^2=|S|$. 
         Theorem B1 in \cite{cm} shows that (T1) holds for $\La$. Thus it follows that $\Phi_p \Phi_{p^2} \mid m_{\La}$. Since $\Phi_p \Phi_{p^2}=1+x+ \ldots +x^{p^2-1}$ and $|\La|=p^2$ we have $\La$ is a complete set of coset representatives of $\Z_{qr}$ in $\Z_n$. This contradicts the fact that all but the elements of $\La$ projecting to $x \in \Z_{qr}$ give the same remainder$\pmod{p}$. 
        
        We have that there is $\mu$ in $\La$ projecting to $x \in \Z_{qr}$ with $p \nmid \mu-\la$ and $p \nmid \mu-\la''$, where $\la$ projects to $R \setminus\{x\}$ and $\la'$ to $Q \setminus\{x\}$, otherwise $\La$ is contained in a $\Z_{pqr}$-coset. 
        Thus we obtain $\Phi_{p^2q} \mid m_S$ and $\Phi_{p^2r} \mid m_S$ since $\Phi_n \nmid m_S$.
        
        Now we exclude the case $p=2$. If $p=2$, then by $r>q$ we have $r \ge 5$. Thus the description of $\tilde{\La}_{qr}$ shows that there are at least $4$ elements of $\La$ projecting to $R$ in the same connected component of $\Gamma$. The difference of any two of these is divisible by $p$ so there is a pair whose difference is divisible by $p^2$ as well. Since their projections to $\Z_{qr}$ lie in $R$ we obtain $\Phi_r \mid m_S$, a contradiction. 
        
        We claim that $p>r$. Otherwise, since $p=2$ is excluded we have $r-p \ge 2$. Hence there are more than $p$ elements of $\La$, projecting to mutually different points of $R \setminus \{x\}$. Their difference is divisible by $p$ since they are in the same connected component of $\Gamma$ but then there would be a pair of elements of $\Lambda$ whose difference is divisible by $p^2$ as well, implying $\Phi_r \mid m_S$ and $r \mid |S| $, a contradiction. 
        Now $p>q$ follows from our assumption that $r > q$.
        
        
        A simple calculation shows that the number of elements $m$ of $\La$ projecting to $x$ exceeds $p$. This follows from $p^2 \le kq+lr$ and $m=k+l$ since $p>q,r$, where $k$ and $l$ are defined by $\La_{qr}=kQ+lR$. Thus there are elements $\la_3,\la_4,\la_5$ of $\La$ with $qr \mid \mid \la_3-\la_4$ and $pqr \mid \mid \la_3-\la_5$ thus implying $\Phi_p \Phi_{p^2} \mid m_S$.

        We remind that $\Phi_{pr} \mid m_S$.
        The fact that $\Phi_p \Phi_{p^2} \mid m_S$ implies that every $\Z_{qr}$-coset contains the same amount of elements of $S$. Denote this number by $a$. If $a=1$, then $S$ is a tile so we assume $a \ge 2$. 
        If $a=q$, then $|S|=p^2q$ which case has been handled before. If $a>q$, then $|S|>p^2q$ and by a simple pigeonhole argument  we obtain $r \mid |S|$.
        
        Now project $S$ to $\Z_{p^2r}$. $S_{p^2r}$ is a set, otherwise $q \mid |S|$.
        Then each $\Z_r$-coset in $\Z_{p^2r}$ contains $2 \le a < q$  elements of $S_{p^2r}$. 
        Using $\Phi_{p^2r} \mid m_S$ 
        we have that the intersection of $S_{p^2r}$ with each $\Z_{pr}$-coset is the union of $\Z_p$-cosets or $\Z_r$-cosets. Since $a<q<r$ 
        it is the union of $\Z_p$-cosets only. 
        
        Then we build up a graph $\Gamma'$ whose vertices are the elements of $\La$ and two vertices are adjacent if and only if their difference is not divisible by either $p$ or $r$. It follows from the previous observations using $p \ge 3$ that $\Gamma'$ is connected. 
        Since $\Phi_n \nmid m_\La$ we have that the $q$-coordinates of these elements of $\La$ are the same so $\La$ is contained in a proper coset of $\Z_n$. Thus by Proposition \ref{propreduction} we have that $S$ is a tile. 
        
        {\bf Case 2.} Assume $pq \mid\mid |S|$.
        We  may exclude the case, when  $S$ is complete set of residues$\pmod{pq}$, which is the same as $S$ contains exactly one element from each $\Z_{pr}$-coset, since $S$ is a tile in this case. Then there are elements of $S$ projecting to the same element of $\Z_{pq}$. We either have $\Phi_{pr} \mid m_{\La}$ or $\Phi_p \mid m_{\La}$ or $\Phi_{r} \mid m_{\La}$.  The latter case is impossible since $r \nmid |S|$.
        
        Now we project $S$ to $\Z_{p^2q}$. The projection $S_{p^2q}$ is a set, otherwise we have $\Phi_r  \mid m_{\La}$ and thus $r \mid |S|$, a contradiction. If there is a $\Z_{p^2}$-coset of $\Z_{p^2q}$ containing more than $p$ elements of $S_{p^2q}$, then there are two among them, whose difference is not divisible by $p$, implying $\Phi_{p^2} \mid m_{\La}$ or $\Phi_{p^2r} \mid m_{\La}$.
        If every $\Z_{p^2}$-coset of $\Z_{p^2q}$ contains exactly $p$ elements of $S_{p^2q}$ and for each $\Z_{p^2}$-coset all of these elements are contained in the same $\Z_p$-coset, then we have that $S$ is a tile. Thus we may assume $\Phi_{p^2} \mid m_{\La}$ or $\Phi_{p^2r} \mid m_{\La}$.
        
        Applying Lemma \ref{lemprojekcio} using the conditions that $\Phi_p \mid m_{\La}$ or $\Phi_{pr} \mid m_{\La}$,   and $\Phi_{p^2} \mid m_{\La}$ or $\Phi_{p^2r} \mid m_{\La}$
        we obtain that the projection of $\La$ to $\Z_{p^2}$ is of the following form: 
        \begin{equation}\label{eqp2}
             \La_{p^2}=c \Z_{p^2}+rD, 
             \end{equation}
        where $c$ is a nonnegative integer and $D$ is a multiset on $\Z_{p^2}$. 
        If $c = 0$, then $r \mid |\La|$ and if $D=\emptyset$, then $p^2 \mid |\La|$. 
        Both cases contradict our assumption that $pq \mid \mid |S|=|\La|$. 
        Thus there are at least $r+1$ elements of $\La$ projecting to the same element of $\Z_{p^2}$ so we obtain $\Phi_q \mid m_S$. If $q <r$, then there are two among them, whose difference is divisible by $q$ as well, implying $\Phi_r \mid m_S$, a contradiction. Thus we may assume $q>r$.  
        
        Assume $\Phi_{p^2r} \mid m_{\La}$. Then $\La_{p^2r}$ is a multiset, which is the sum of $\Z_p$-cosets and $\Z_r$-cosets by Lemma \ref{lem0603}. Since $c>0$ and $D\ne0$ in equation \eqref{eqp2}, there is a $\Z_{pr}$-coset of $\Z_{p^2r}$, whose intersection with the multiset $\La_{p^2r}$ is the sum of $k$ $\Z_p$-cosets and $l$ $\Z_r$-cosets with $k+l \ge 2$. 
        
        Now we argue that $\La_{p^2r}$ contains a $\Z_r$-coset. Assume this is not the case, thus we can write  $\La_{p^2r}$ as the sum of $\Z_p$-cosets. Then the number of elements of $\La_{p^2r}$ contained in each  $\Z_{pr}$-coset is divisible by $p$.
        If $\Phi_p \mid m_{\La}$, then these numbers are the same so we would have $p^2 \mid |\La|$, a contradiction. If $\Phi_{pr} \mid m_{\La}$, then $\La_{pr}$ is the sum of $\Z_p$ and $\Z_r$-cosets. 
        If $\La_{pr}$ contains a $\Z_r$-coset, then $\La_{p^2r}$-contains a $\Z_{pr}$-coset since it is the sum of $\Z_p$-cosets so $\La_{p^2r}$ contains a $\Z_r$-coset as required. 
        If $\La_{pr}$ is the sum of $\Z_p$-cosets only, then we again have $p^2 \mid |\La|$, a contradiction.
        
        Since $c>0$ in equation \eqref{eqp2} and since we have a $\Z_{p^2r}$-coset containing a $\Z_r$-coset, which is projected on a $\Z_r$-coset contained in $\La_{p^2r}$, we have two elements  $\la_1,\la_2 \in \La_{p^2r}$ with $pr=gcd(p^2r,\la_1-\la_2) $, which we may also write as $pr \mid\mid \la_1-\la_2$. 
        It is not hard to see from the description of $\La $ in this case that for every $d \mid p^2r$ we have $\la,\la' \in \La_{p^2r}$ with $d \mid \mid \la-\la'$. 
        Then we have $\Phi_p \Phi_{r}\Phi_{pr}\Phi_{p^2}\Phi_{p^2r} \mid m_{S}$ in $\Z_q[x]$ so by projecting $S$ to $\Z_{p^2r}$ we obtain a multiset of the form $c'\Z_{p^2r}+qD'$ ($c',D' \ge0$). If $c'=0$, then $S$ is a spectral set, which is the union of $\Z_q$-cosets, hence a tile. If $c'>0$, then $D'=0$ since a $\Z_q$-coset cannot contain more than $q$ points of $\La$.
        Then $p^2r \mid |S|$, a contradiction.
        
        Thus we may assume $\Phi_{p^2r} \nmid m_{\La}$ so we have $\Phi_{p^2} \mid m_{\La}$. Then since $p^2 \nmid |S|$ we must have $\Phi_p \nmid m_{\La}$ so we have $\Phi_{pr} \mid m_{\La}$. We remind that we have already seen that $\Phi_q \mid m_S$ so $S$ is equidistributed on the $\Z_{p^2r}$-cosets. 
        Now we apply this to obtain information about the structure of $S$.
        
        We investigate the intersection of $S$ with each $\Z_{p ^2r}$-cosets. Assume $s_1,s_2 \in S$ are contained in a $\Z_{p ^2r}$-coset but are not contained in a $\Z_{pr}$-coset.
        If their difference is not divisible by $r$, then we would have $\Phi_{p^2r} \mid m_{\La}$, which we have excluded above. 
        Similarly, if $s_3 \ne s_4 \in S$ are contained in a $\Z_{pr}$-coset, then they need to have different $r$-coordinates, otherwise we would have $\Phi_p \mid m_{\La}$.
        Their difference is not divisible by $p^2$ since we would have $\Phi_r \mid m_{\La}$, which is impossible since $r \nmid |\La|=|S|$.
        Each $\Z_{p^2r}$-coset contains the same amount of elements of $S$ by $\Phi_q \mid m_S$, which is then at least $p$. The previous argument shows that each $\Z_{p^2r}$-coset contains exactly $p$  elements of $S$ and either they lie in different $\Z_{pr}$-cosets or they are all contained in one $\Z_{pr}$-coset with $p^2$ not dividing their differences.
        If for each  $\Z_{p^2r}$-coset only one of the two types appears, then $S$ is a tile. 
        
        Now we argue that $\Phi_{p^2q} \mid m_S$ or $\Phi_{n} \mid m_S$. By Proposition \ref{propreduction} we may assume $0 \in \La$ and that $\La$ is not contained in a proper subgroup of $\Z_{p^2qr}$. Then our claim follows from Lemma \ref{lempqnemoszt}.
        
         Assume  $\Phi_{p^2q} \mid m_S$. Since $r \nmid |S|=|\La|$ we have $S_{p^2q}$ is a set. Then $S_{p^2q}$ is the union of $\Z_p$ and $\Z_q$-cosets. 
        Since there is at least one $\Z_{p^2r}$-coset such that each of its $\Z_{pr}$-cosets contains one element of $S$ we have that $S_{p^2q}$ is the union of $\Z_q$-cosets all contained in different $\Z_{pq}$-cosets.
        This contradicts the existence of $\Z_{p^2r}$-cosets of $\Z_{n}$, which contains a $\Z_{pr}$-coset containing exactly $p$ elements of $S$ (any pair of these elements have different $r$-coordinate).
        
        It follows that we may assume $\Phi_{n} \mid m_S$. It is clear from our previous discussion that there are $x,y$ with $p \mid x-y$ and $q \nmid x-y$ such that $\Z_{pr}+x$ and $\Z_{pr}+y$ 
        contains $1$ and $p$ elements of $S$, respectively. 
        We remind that if it contains $p$, then in that $\Z_{pr}$-coset, the  difference of the elements of $S$ lying in this $\Z_{pr}$-coset is not divisible by either $p^2$ or with $r$.
        Then one can build up a $3$-dimensional cube in the $\Z_{pqr}$-coset containing $x$ and $y$, which contains exactly one element of $S$ or exactly two elements of $S$ of Hamming distance $2$, which contradicts the fact that $S$ satisfies the $3$-dimensional cube-rule in each $\Z_{pqr}$-cosets. 
        
        A similar argument works if $pr \mid \mid |S|$ since the role of $q$ and $r$ is symmetric.

        
        {\bf Case 3.}
        Let us assume that $qr \mid\mid |S|$. \\
        Then either $S$ is a complete set of residues $\pmod{qr}$, whence $S$ is a tile or there are two different elements of $S$, whose difference is divisible by $qr$. This would imply $\Phi_p  \mid m_{\La}$ or $\Phi_{p^2}  \mid m_{\La}$. In both of these cases we have $p  \mid |\La|=|S|$, a contradiction. 
        
        \subsection{$|S|$ has at most one prime divisors among $p,q,r$}
        Let us assume $1 \mid\mid |S|$ or $p \mid\mid |S|$ or $q \mid\mid |S|$ or $r \mid\mid |S|$. 
        
        If $\Phi_n \mid m_S$, then the intersection of $S$ with each $\Z_{pqr}$-coset satisfies the $3$-dimensional cube-rule. 
        Then by Lemma \ref{lemcuberulecorr}, we cannot have $1 \mid\mid |S|$. If $S$ is a union of $\Z_p$-cosets, $\Z_q$-cosets or $\Z_r$-cosets, respectively, then by applying Proposition \ref{propreduction} we conclude that $S$ is a tile. 
        
        A similar argument works for $\La$. If $\Phi_n \mid m_{\La}$, then $\La$ is the union of $\Z_p$-coset or $\Z_q$-cosets or $\Z_r$-cosets. Then since $(\La,S)$ is also a spectral pair we 
        have by Proposition \ref{propreduction} that $\La$ is a tile. Then $|\La| = |S|\mid n$ so we have that $|S|=|\La|=p$ or $|S|=|\La|=q$ or $|S|=|\La|=r$. By Lemma  \ref{lemcuberulecorr} $\La$ is just a $\Z_{|\La|}$-coset, which implies that $\Phi_{|\La|} \mid m_S$. Then it is easy to see that (T1) and (T2) are satisfied for $S$ so it is a tile.
        
        Thus we may assume $\Phi_n \nmid m_S$ and    $\Phi_n \nmid m_{\La}$.
        By Lemma \ref{lempqnemoszt} we have $\Phi_{p^2q} \Phi_{p^2r}\mid m_{S}$.
        
        Without loss of generality we may assume $r \nmid |S|$ since the role of $q$ and $r$ are symmetric. 
        Then $S_{p^2q}$ is a set so it is the disjoint union of $\Z_p$-cosets and $\Z_q$-cosets. 
        By Proposition \ref{propreduction} we have that $\langle S \rangle= \Z_n$ so $\langle S_{p^2q} \rangle= \Z_{p^2q}$. It follows using $0 \in S_{p^2q}$ that that at least two $\Z_{pq}$-cosets of $\Z_{p^2q}$ contain elements of $S_{p^2q}$.
        Note that the same argument works for $\La_{p^2q}$ as well.
        We remind that $S_{p^2q}$
        is the union of $\Z_p$-cosets and $\Z_q$-cosets. 
        
        Assume there  is a $\Z_{pq}$-coset in $\Z_{p^2q}$, whose intersection with $S_{p^2q}$
        contains a $\Z_q$-coset and another $\Z_{pq}$-coset containing a $\Z_p$-coset also contained in $S_{p^2q}$. 
        Then for every $d \mid p^2q$ with $d \ne p^2q$, $d \ne p$ there are $s_{d,1},s_{d,2} \in S$ such that $d \mid \mid \pi_{p^2q}(s_{d,1})-\pi_{p^2q}(s_{d,2})$, where $\pi_{p^2q}$ is the natural projection of $\Z_n$ to $\Z_{p^2q}$. Hence $\Phi_{d} \mid m_{\La}$ or $\Phi_{dr} \mid m_{\La}$ for every $ d \mid p^2q$ and $d \ne 1$, $d \ne pq$. Note that the exception for $d=p$ only applies if the intersection of each $\Z_{pq}$-coset in $\Z_{p^2q}$ with $S_{p^2q}$ is either empty or a $\Z_p$-coset or a $\Z_q$-coset 
        
        It follows from Lemma \ref{lempqnemoszt} that $\Phi_{qr} \mid m_{\Lambda}$ or $\Phi_{pqr} \mid m_{\Lambda}$ since $\Phi_{n} \mid m_{\Lambda}$ has already been excluded.

        Let us assume $\Phi_{pqr} \mid m_{\Lambda}$.
        Then $\Phi_{d} \mid m_{\La}$ or $\Phi_{dr} \mid m_{\La}$ for every $ d \mid p^2q$ and $d \ne 1$. Thus by Lemma \ref{lemprojekcio} we have that $\La_{p^2q}=c\Z_{p^2q}+rD$. If $c>0$, then $|S|\ge p^2q$. In this case $S$ is a tile or $r \mid |S|$ by the argument used in Case 1.  
        If $c=0$, then we obtain $r \mid |S|$, a contradiction. 
        
        It follows we may assume $\Phi_{qr} \mid m_{\Lambda}$ and $\Phi_{pqr} \nmid m_{\Lambda}$. Then $\Lambda_{qr}$ is the sum of $\Z_q$-cosets and $\Z_r$-cosets. If both types appear, then $\Phi_p \mid m_S$ or $\Phi_{p^2} \mid m_S$. The fact that $\Phi_p \mid m_S$ implies that   the intersection of each $\Z_{pq}$-coset in $\Z_{p^2q}$ with $S_{p^2q}$ is of the same cardinality, which does not hold in our case since $p \ne q$. On the other hand, $\Phi_{p^2} \mid m_S$ would imply that $\Lambda_{p^2}$ is equidistributed in each $\Z_p$-coset, which does not hold since there is a $\Z_{pq}$-coset in $\Z_{p^2q}$ that intersects $S_{p^2q}$ in a single $\Z_q$-coset. 
        Now we have that $\Lambda_{qr}$ is the disjoint union of $k\ge 2$ $\Z_q$-cosets  by Proposition \ref{propreduction} \ref{itemreductiona}. If $\lambda$ and $\lambda'$ are elements of $\Lambda_{qr}$ of Hamming distance $2$, then for the elements $\overline{\lambda}$ and $\overline{\lambda'}$ of $\Lambda$ projecting to these elements we have $qr \mid \mid \overline{\lambda} - \overline{\lambda'}$ since $\Phi_{pqr} \nmid m_S$ and $\Phi_{n} \nmid m_S$.
    We obtain that $\overline{\lambda}$ and $\overline{\lambda'}$ are contained in the same
    $\Z_{pqr}$-coset. This holds for every element of $\Lambda$ if $q >2$ or if $k >2$,
    when Proposition \ref{propreduction} \ref{itemreductiona} gives that $S$ is a tile.
    The remaining case is $|S|=4 $, which is handled by Theorem 2.1 in
    \cite{KMkishalmaz}.
        
        Assume now that the intersection of $S_{p^2q}$ with each $\Z_{pq}$-coset is the union of (possible 0) $\Z_q$-cosets and this intersection is nonempty for at least two cosets of $\Z_{pq}$. 
        Then it follows from $\Phi_n \nmid m_{\La}$ that if $x,y \in S$, whose natural projections to $\Z_{p^2q}$ are not contained in a proper coset of $\Z_{pq}$, then their difference is divisible by $r$. If $q>2$, then it follows from $\langle S_{p^2q}\rangle=\Z_{p^2q}$ that the difference of any two elements of $S$ is divisible by $r$ so it is contained in a proper coset of $\Z_n$. Proposition \ref{propreduction} gives that $S$ is a tile in this case. 
        
        If $q=2$ but there are more than two $\Z_{pq}$-cosets containing elements of $S_{p^2q}$, then we build up a graph $\Gamma$ having vertex set $S$ and two vertices are adjacent if and only if their difference is not divisible by either $p$ or $q$. Again, the difference of two adjacent vertices is divisible by $r$. It is not hard to verify that $\Gamma $ is connected 
        and then $S$ is contained in a $\Z_{p^2q}$-coset of $\Z_n$ so it is a tile. 
        
        If there is a 
        $\Z_{pq}$-coset in $\Z_{p^2q}$, whose intersection with $S_{p^2q}$ contains at least two $\Z_{q}$-cosets and another one which contains at least one $\Z_{q}$-coset, then again we have that for every $1 \ne d \mid p^2q$ there are elements $s_1,s_2$ of $S_{p^2q}$ such that $d \mid \mid s_1-s_2$, which case has already been handled above. 
        
        The $|S|=4$ case follows from a Theorem 2.1 of Kolountzakis and Matolcsi \cite{KMkishalmaz}, which says that spectral sets of cardinality at most $5$  in finite abelian groups are tiles.

        Thus it remains that $S_{p^2q} $ is the union of $\Z_p$-cosets only. We also have that these $\Z_p$-cosets are not contained in a $\Z_{pq}$-coset or a $\Z_{p^2}$-coset of $\Z_{p^2q}$. 
        For every $s \in S_{p^2q}$ the unique element of $S$ projecting to $s$ and it is denoted by $\bar{s}$.
        Assume that for every $x \in S_{p^2q}$ there is $y \in S_{p^2q}$ such that $p \nmid x-y$ and $q \nmid x-y$. Then for every $x' \in x+\Z_p \subset S_{p^2q}$, we have $p \nmid x'-y$ and $q \nmid x'-y$. Since $\Phi_n \nmid m_{\La}$ we have that $r \mid \bar{x}-\bar{y}$ and $\bar{x'}-\bar{y}$ so $r \mid \bar{x}-\bar{x'}$.
        The same holds for every element of $x+\Z_p$ so $\{ \overline{x+u}  \colon~ u \in \Z_p \}$ is a $\Z_p$-coset. Therefore $S$ is the union of $\Z_p$-cosets, which is handled by Proposition \ref{propreduction}.
        
        If there is a $x \in S_{p^2q}$ such that $p \mid x-y$ or $q \mid x-y$ for every $y \in S_{p^2q}$, then $S_{p^2q}$ is contained in $(x+\Z_{pq}) \cup (x+\Z_{p^2})$. Since $S_{p^2q}$ is not contained in any of this two sets appearing in the union we again have that for every $d \mid p^2q$ with $d \ne p^2q$ there are $x,y \in S_{p^2q}$ with $d \mid \mid x-y$, which case has already been settled.   
        
        \section*{Acknowledgement}
        The author is grateful to Gergely Kiss, Romanos Diogenes Malikiosis and M\'at\'e Vizer. Many of the ideas appearing in the manuscript come up during fruitful discussions aiming to prove other results concerning Fuglede's conjecture or in our joint paper \cite{negyen}. In particular, I learned a version of Lemma \ref{lemprojekcio} from Romanos Diogenes Malikiosis and an earlier version of Lemma \ref{lemcuberulecorr} was worked out by Gergely Kiss and M\'at\'e Vizer.
\bibliographystyle{amsplain}

%
%

\begin{dajauthors}
\begin{authorinfo}[pgom]
  G\'abor Somlai\\
  E\"otv\"os Lor\'and University\\
  Budapest, Hungary\\
  gabor.somlai\imageat{}ttk\imagedot{}elte\imagedot{}hu \\
  \url{http://zsomlei.web.elte.hu}
\end{authorinfo}
  \end{dajauthors}
\end{document}